\newcommand{\e}{\varepsilon}
\newtheorem{theorem}{Theorem}[section]
\newtheorem{problem}[theorem]{Problem}
\newtheorem{proposition}[theorem]{Proposition}
\newtheorem{corollary}[theorem]{Corollary}
\newtheorem{claim}[theorem]{Claim}
\begin{document}
\title{On local convexity of nonlinear mappings between Banach spaces}
\author{Iryna Banakh}
\address{Ya.Pidstryhach Institute for Applied Problems of Mechanics and
Mathematics of National Academy of Sciences, Lviv, Ukraine}
\email{ibanakh@yahoo.com}
\author{Taras Banakh}
\address{Ivan Franko National University of Lviv (Ukraine), and Jan
Kochanowski University, Kielce (Poland)}
\email{t.o.banakh@gmail.com}
\author{Anatolij Plichko}
\address{Cracow University of Technology, Krak\'ow, Poland}
\email{aplichko@pk.edu.pl}
\author{Anatoliy Prykarpatsky}
\address{AGH University of Science and Technology, Krak\'{o}w 30059 Poland \
}
\email{pryk.anat@ua.fm, prykanat@cybergal.com}
\keywords{Locally convex mapping, Hilbert and Banach spaces, modulus of
convexity, modulus of smoothness, Lipschitz-open maps}

\begin{abstract}
We find conditions for a smooth nonlinear map $f:U\rightarrow V$ between
open subsets of Hilbert or Banach spaces to be locally convex in the sense
that for some $c$ and each positive $\varepsilon<c$ the image $%
f(B_\varepsilon(x))$ of each $\varepsilon$-ball $B_\varepsilon(x)\subset U$
is convex. We give a lower bound on $c$ via the second order Lipschitz
constant $\mathrm{Lip}_2(f)$, the Lipschitz-open constant $\mathrm{Lip}_o(f)$
of $f$, and the 2-convexity number $\mathrm{conv}_2(X)$ of the Banach space $%
X$.
\end{abstract}
\maketitle

\section*{\protect\medskip Introduction}

The local convexity of nonlinear mappings of Banach spaces is important in
many branches of applied mathematics \cite{Aug,BP,KZ,PBPP,Pr1,Pr,SPS}, in
particular, in the theory of nonlinear differential-operator equations,
optimization and control theory etc. Locally convex maps appear naturally in
various problems of Fixed Point Theory \cite{Ge1,Ge2,Go} and Nonlinear Analysis \cite{Ho,Li,Nir,Sch}.

Let $X,Y$ be Banach spaces. A map $f:U\rightarrow Y$ defined on an open
subset $U\subset X$ is called \emph{locally convex} at a point $x\in U$ if
there is a positive constant $c>0$ such that for each positive $%
\varepsilon\le c $ and each point $x\in U$ with $B_\varepsilon(x)\subset U$
the image $f(B_\varepsilon(x))$ is convex. Here $B_\varepsilon(x)=\{y\in
X:\|x-y\|<\varepsilon\}$ stands for the open $\varepsilon$-ball centered at $%
x$. The local convexity of $f$ at $x$ can be expressed via the \emph{local
convexity radius}
\begin{equation*}
\mathrm{lcr}_x(f)=\sup\big\{c\in[0,+\infty):\forall \varepsilon\le
c\;\forall x\in U\mbox{ with }B_\varepsilon(x)\subset U\mbox{ the set
$f(B_\e(x))$ is convex}\big\}.
\end{equation*}
It follows that $f$ is locally convex at $x\in U$ if and only if $\mathrm{lcr%
}_x(f)>0$.

A map $f:U\to Y$ is defined to be

\begin{itemize}
\item \emph{locally convex} if $f$ is locally convex at each point $x\in U$;

\item \emph{uniformly locally convex} if its \emph{local convexity radius} $%
\mathrm{lcr}(f)=\inf\limits_{x\in U}\mathrm{lcr}_x(f)$ is not equal to zero.
\end{itemize}

For example, if a homeomorphism $f:U\to V$ between open subsets $U\subset X$%
, $V\subset Y$ with $f(0)=0\in U$ is norm convex in the sense that
\begin{equation*}
\Big\|f\Big(\frac{x+x^{\prime }}{2}\Big)\Big\|\le \frac12\big(%
\|f(x)\|+\|f(x^{\prime })\|\big)\mbox{  \ for all $x, x'\in f(U)$},
\end{equation*}
then the inverse map $f^{-1}$ is locally convex at the point $y=0$. In
particular, if $Y$ is a Banach lattice with the order $\le $ and a
homeomorphism $f:U\to V$ is Jensen convex, i.e.
\begin{equation*}
f\Big(\frac{x+x^{\prime }}{2}\Big)\le \frac12\big(f(x)+f(x^{\prime })\big)
\end{equation*}
for all $x, x^{\prime }\in U$, then the inverse map $f^{-1}$ is locally
convex at the point $y=0$. \smallskip

In this paper we find some conditions on a map $f:U\to Y$ guaranteeing that $%
f$ uniformly locally convex, and give a lower bound on the local convexity
radius $\mathrm{lcr}(f)$ of $f$. This bound depends on the second order
Lipschitz constant $\mathrm{Lip}_2(f)$ of $f$, the Lipschitz-open constant $%
\mathrm{Lip}_o(f)$ of $f$, and the 2-convexity number $\mathrm{conv}_2(X)$
of the Banach space $X$.

\section{Banach spaces with modulus of convexity of power type 2}

The \emph{modulus of convexity} of a Banach space $X$ is the function $%
\delta_X:[0,2]\to[0,1]$ assigning to each number $t\ge 0$ the real number
\begin{equation*}
\delta_X(t)=\inf\big\{1-\big\|\tfrac{x+y}2\big\|:x,y\in S_X,\;\|x-y\|\ge t%
\big\},
\end{equation*}
where $S_X=\{x\in X:\|x\|=1\}$ is the unit sphere of the Banach space $X$.
By \cite[p.60]{LT2}, the modulus of convexity can be equivalently defined as
\begin{equation*}
\delta_X(t)=\inf\big\{1-\big\|\tfrac{x+y}2\big\|:x,y\in B_X,\;\|x-y\|\ge t%
\big\},
\end{equation*}
where $B_X=\{x\in X:\|x\|\le1\}$ is the closed unit ball of $X$.

Any Hilbert space $E$ of dimension $\dim (E)>1$ has modulus of convexity
\begin{equation*}
\frac18t^2\le \delta _{E}(t)=1-\sqrt{1-(t/2)^2}\leq \frac{1}{4}t^{2}.
\end{equation*}
By \cite[p63]{LT2} or \cite{GH}, $\delta _{X}(t)\leq \delta _{E}(t)\leq
\frac{1}{4}t^{2}$ for each Banach space $X$.

Following \cite[p.63]{LT2}, \cite[p.154]{DGZ}, we say that the Banach space $%
X$ has \emph{modulus of convexity of power type $p$} if there is a constant $%
L>0$ such that $\delta _{X}(t)\geq L\cdot t^{p}$ for all $t\in \lbrack 0,2]$%
. It follows from $L\,t^{p}\leq \delta _{X}(t)\leq \frac{1}{8}t^{2}$ that $%
p\geq 2 $. Hilbert spaces have modulus of convexity of power type 2. Many
examples of Banach spaces with modulus of convexity of power type 2 can be
found in \cite[\S 1.e]{LT2}, \cite[Ch.V]{DGZ}, \cite{BGHV}, \cite{LPT}, and
\cite{HMZ}. In particular, the class of Banach spaces with modulus of
convexity of power type 2 includes the Banach spaces $L_p$ for $1<p\le 2$,
and reflexive subspaces of the Banach space $L_1$. By \cite{GH}, a Banach
space $X$ has modulus of convexity of power type 2 if and only if for any
sequences $(x_n)_{n\in\omega}$ and $(y_n)_{n\in\omega}$ in $X$ the
convergence $2(\|x_n\|^2+\|y_n\|^2)-\|x_n+y_n\|^2\to0$ implies $%
\|x_n-y_n\|\to 0$.

For a Banach space $X$ consider the constant
\begin{equation*}
\mathrm{conv}_2(X)=\inf\Big\{\frac{1-\|\frac{x+y}2\|}{\|x-y\|^2}:x,y\in B_X,
\;\;x\ne y\Big\}\ge0
\end{equation*}%
called the \emph{2-convexity number} of $X$ and observe that $\mathrm{conv}%
_2(X)>0$ if and only if $X$ has modulus of convexity of power type 2. It
follows from \cite[p.63]{LT2} or \cite{GH} that
\begin{equation*}
0\le\mathrm{conv}_2(X)\le \mathrm{conv}_2(\ell_2)=\frac18
\end{equation*}%
for each Banach space $X$.

\section{Moduli of smoothness of maps of Banach spaces}

In this section we recall known information \cite[\S 2.7]{VL} on the moduli
of smoothness $\omega _{n}(f,t)$ of a function $f:U\rightarrow Y$ defined on
a subset $U\subset X$ of a Banach space $X$ with values in a Banach space $Y$.

The \emph{$n$-th modulus of smoothness} of $f$ is defined as
\begin{equation*}
\omega_n(f,t)=\sup\{\|\Delta^n_h(f,x)\|:h\in X,\;\|h\|\le
t,\;[x,x+nh]\subset U\}
\end{equation*}
where
\begin{equation*}
\Delta^n_h(f,x)=\sum_{k=0}^n(-1)^k\binom{n}{k}f(x+kh)
\end{equation*}%
is the $n$-th difference of $f$.

In particular,
\begin{equation*}
\begin{aligned} \omega _{1}(f,t)&=\sup \{\Vert f(x+h)-f(x)\Vert :\; \Vert
h\Vert \leq t,\;[x,x+h]\subset U\}\mbox{ and }\\ \omega _{2}(f,t)&=\sup
\{\Vert f(x+h)-2f(x)+f(x-h)\Vert :\;\Vert h\Vert \leq t,\;[x-h,x+h]\subset
U\}. \end{aligned}
\end{equation*}
Here $[x,y]=\{tx+(1-t)y:t\in \lbrack 0,1]\}$ stands for the segment
connecting two points $x,y\in X$.

The constants
\begin{equation*}
\mathrm{Lip}_1(f)=\sup_{t>0}\frac{\omega_1(f,t)}{t}\mbox{ \ \ and \ \ }%
\mathrm{Lip}_2(f)=\sup_{t>0}\frac{\omega_2(f,t)}{t^2}
\end{equation*}%
are called the \emph{Lipschitz constant} and the \emph{second order
Lipschitz constant} of $f$, respectively. \smallskip

A function $f:U\to Y$ is called (\emph{second order}) \emph{Lipschitz} if
its (second order) Lipschitz constant $\mathrm{Lip}_1(f)$ (resp. $\mathrm{Lip%
}_2(f)$) is finite. The second order Lipschitz property of a weakly
G\^ateaux differentiable function $f$ can be deduced from the Lipschitz
property of its derivative $f^{\prime }$. \smallskip

Let us recall \cite[p.154]{Deimling} that a function $f:U\rightarrow Y$ is \emph{weakly G\^{a}teaux differentiable} at a point $x\in U$ if
there is a bounded linear operator $f_{x}^{\prime }:X\rightarrow Y$ (called
the \emph{derivative} of $f$ at $x$) such that for each $h\in X$ and each
linear continuous functional $y^{\ast }\in Y^{\ast }$ we get
\begin{equation*}
\lim_{t\rightarrow 0}\frac{y^{\ast }\big(f(x+th)-f(x)\big)}{t}=y^{\ast
}\circ f_{x}^{\prime }(h).
\end{equation*}%
If
\begin{equation*}
\lim_{h\rightarrow 0}\frac{\Vert f(x+h)-f(x)-f_{x}^{\prime }(h)\Vert }{\Vert
h\Vert }=0,
\end{equation*}%
then $f$ is \emph{Fr\'{e}chet differentiable} at $x$.

The derivative $f^{\prime }_x$ belongs to the Banach space $L(X,Y)$ of all
bounded linear operators from $X$ to $Y$, endowed with the operator norm $%
\|T\|=\sup_{\|x\|\le 1}\|T(x)\|$.

The following two propositions are known and we present their short proofs
for completeness.

\begin{proposition}
Let $X,Y$ be Banach spaces and $U\subset X$ be an open subset. A function $%
f:U\to Y$ is Lipschitz if $f$ is weakly G\^ateaux differentiable at each
point of $U$ and the derivative map $f^{\prime }:U\to L(X,Y)$, $f^{\prime
}:x\mapsto f^{\prime }_x$, is bounded. In this case $\mathrm{Lip}_1(f)\le
\|f^{\prime }\|_\infty=\sup_{x\in U}\|f^{\prime }_x\|$.
\end{proposition}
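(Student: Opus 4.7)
The plan is to reduce to a one-dimensional mean value theorem argument via Hahn--Banach. Fix two points $x,y\in U$ with $[x,y]\subset U$; I want to estimate $\|f(y)-f(x)\|$ by $\|f'\|_\infty\cdot\|y-x\|$, because after dividing by $\|y-x\|$ and taking the supremum over admissible pairs with $\|y-x\|\le t$ this gives $\omega_1(f,t)\le \|f'\|_\infty\cdot t$ for every $t>0$, whence $\mathrm{Lip}_1(f)\le\|f'\|_\infty$.

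First I would pick, by the Hahn--Banach theorem, a continuous linear functional $y^{\ast}\in Y^{\ast}$ with $\|y^\ast\|\le 1$ and $y^\ast\bigl(f(y)-f(x)\bigr)=\|f(y)-f(x)\|$. Then I would introduce the real auxiliary function
\begin{equation*}
\varphi:[0,1]\to\mathbb{R},\qquad \varphi(t)=y^\ast\bigl(f(x+t(y-x))\bigr).
\end{equation*}
Using the weak G\^ateaux differentiability of $f$ at the point $x+t(y-x)\in U$ in the direction $h=y-x$, the difference quotient $\bigl(\varphi(t+s)-\varphi(t)\bigr)/s$ converges, as $s\to 0$, to $y^\ast\circ f'_{x+t(y-x)}(y-x)$. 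Hence $\varphi$ is differentiable on $[0,1]$ (one-sidedly at the endpoints), with
\begin{equation*}
\varphi'(t)=y^\ast\bigl(f'_{x+t(y-x)}(y-x)\bigr).
\end{equation*}
In particular $\varphi$ is continuous on $[0,1]$.

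Now I would apply the classical real-variable mean value theorem to $\varphi$ and obtain some $\xi\in(0,1)$ with $\varphi(1)-\varphi(0)=\varphi'(\xi)$. By the choice of $y^\ast$ this yields
\begin{equation*}
\|f(y)-f(x)\|=\varphi(1)-\varphi(0)=y^\ast\bigl(f'_{x+\xi(y-x)}(y-x)\bigr)\le\|f'_{x+\xi(y-x)}\|\cdot\|y-x\|\le\|f'\|_\infty\cdot\|y-x\|,
\end{equation*}
which is exactly the required bound.

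The only mildly delicate point is verifying that $\varphi$ is genuinely differentiable (hence continuous) at every $t\in[0,1]$ under the weak G\^ateaux hypothesis; once one notes that weak G\^ateaux differentiability at a point is precisely what is needed to take the limit of the scalar difference quotient along a fixed direction, the rest is routine. No Fr\'echet differentiability or continuity of $f'$ is required.
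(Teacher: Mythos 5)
Your proof is correct and follows essentially the same route as the paper's: a Hahn--Banach norming functional reduces the estimate to a scalar function on $[0,1]$ whose derivative is pointwise bounded by $\|f'\|_\infty\cdot\|y-x\|$. The only difference is at the last step, where you invoke the mean value theorem while the paper integrates the derivative; both are valid (and your version quietly sidesteps any question about the integrability of that derivative), though if one allowed complex scalars you would need to replace $\varphi$ by its real part before applying the mean value theorem.
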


\begin{proof}
Let $L=\|f^{\prime }\|_\infty$. The inequality $\mathrm{Lip}_1(f)\le
L=\|f^{\prime }\|_\infty$ will follow as soon as we check that
\begin{equation*}
\|f(x+h)-f(x)\|\le L\|h\|
\end{equation*}
for any $x\in U$ and $h\in X$ with $[x,x+h]\subset U$. Using the Hahn-Banach
Theorem, find a linear continuous functional $y^*\in Y^*$ with unit norm $%
\|y^*\|=1$ such that $y^*\big(f(x+h)-f(x)\big)=\|f(x+h)-f(x)\|$. The weak
G\^ateaux differentiability of $f $ implies that the function
\begin{equation*}
g:[0,1]\to \mathbb{C},\;\;g:t\mapsto y^*(f(x+th)-f(x))
\end{equation*}%
is differentiable and $g^{\prime}(t)=y^*\circ f^{\prime }_{x+th}(h)$ for
each $t\in[0,1]$. Then
\begin{equation*}
\|g^{\prime}\|_\infty\le \|y^*\|\cdot\|f^{\prime }_{x+th}\|\cdot\|h\|\le
1\cdot\|f^{\prime }\|_\infty\cdot\|h\|=L\cdot\|h\|
\end{equation*}%
and
\begin{equation*}
\|f(x+h)-f(x)\|=|g(1)-g(0)|=\Big|\int_0^1g^{\prime }(t)dt\Big|%
\le\int_0^1|g^{\prime }(t)|dt\le L\|h\|\int_0^1dt=L\|h\|.
\end{equation*}
\end{proof}

\begin{proposition}
\label{diff} Let $X,Y$ be Banach spaces and $U\subset X$ be an open subset.
Assume that a function $f:U\to Y$ is weakly G\^ateaux differentiable at each
point of $U$ and the derivative map $f^{\prime }:U\to L(X,Y)$, $f^{\prime
}:x\mapsto f^{\prime }_x$, is Lipschitz. Then

\begin{enumerate}
\item $f$ is Fr\'echet differentiable at each point of $U$;

\item $f$ is second order Lipschitz with $\mathrm{Lip}_2(f)\le\mathrm{Lip}%
_1(f^{\prime })$.
\end{enumerate}
\end{proposition}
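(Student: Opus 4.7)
The plan is to mimic the Hahn--Banach integration trick used in the proof of the previous proposition, but applied to the ``error'' function instead of the increment itself. Let $L=\mathrm{Lip}_1(f')$ and fix $x\in U$ and $h\in X$ with $[x,x+h]\subset U$. First I would establish the quantitative Taylor estimate
\begin{equation*}
\|f(x+h)-f(x)-f'_x(h)\|\le \tfrac{L}{2}\|h\|^2,
\end{equation*}
from which assertion (1) is immediate by dividing by $\|h\|$ and letting $h\to 0$.

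To obtain this estimate, I would pick $y^*\in Y^*$ with $\|y^*\|=1$ that norms $f(x+h)-f(x)-f'_x(h)$, and look at
\begin{equation*}
g:[0,1]\to\mathbb{C},\qquad g(t)=y^*\bigl(f(x+th)-f(x)-t\,f'_x(h)\bigr).
\end{equation*}
The weak G\^ateaux differentiability of $f$ at each point $x+th$ gives, for every $t\in[0,1]$,
\begin{equation*}
g'(t)=y^*\bigl(f'_{x+th}(h)-f'_x(h)\bigr),
\end{equation*}
so that $|g'(t)|\le\|f'_{x+th}-f'_x\|\cdot\|h\|\le L\,t\,\|h\|^2$. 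Since $g(0)=0$, integrating from $0$ to $1$ yields $g(1)\le \tfrac{L}{2}\|h\|^2$, and the choice of $y^*$ converts this into the desired estimate for $\|f(x+h)-f(x)-f'_x(h)\|$.

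Assertion (2) then follows by a two-point application of the same inequality. For any $x\in U$ and any $h$ with $[x-h,x+h]\subset U$, apply the Taylor estimate once to the increment $+h$ at $x$ and once to the increment $-h$ at $x$, obtaining
\begin{equation*}
\|f(x+h)-f(x)-f'_x(h)\|\le\tfrac{L}{2}\|h\|^2,\qquad \|f(x-h)-f(x)+f'_x(h)\|\le\tfrac{L}{2}\|h\|^2.
\end{equation*}
Adding these two bounds via the triangle inequality, the terms $\pm f'_x(h)$ cancel and we obtain $\|f(x+h)-2f(x)+f(x-h)\|\le L\|h\|^2$, so $\omega_2(f,t)\le L t^2$ and hence $\mathrm{Lip}_2(f)\le L=\mathrm{Lip}_1(f')$.

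The only delicate point, and thus the main thing to verify carefully, is the differentiability of the scalar function $g$ on the full interval $[0,1]$: the weak G\^ateaux hypothesis at each point of $U$ plus the fact that the segment $[x,x+h]$ lies in $U$ is exactly what is needed to invoke it at every $t\in[0,1]$ with increment direction $h$. Once $g$ is differentiable with the stated $g'$, its continuity follows from the continuity (in fact, Lipschitz continuity) of $t\mapsto f'_{x+th}$, so the fundamental theorem of calculus applies and the integral estimate is rigorous.
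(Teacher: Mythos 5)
Your proposal is correct and follows essentially the same route as the paper: the same Hahn--Banach norming functional, the same scalar function $g$ (the paper writes $g(t)=y^*(f(x+th)-tf'_x(h))$, differing from yours only by the constant $y^*(f(x))$), the same bound $|g'(t)|\le tL\|h\|^2$ integrated to get the Taylor estimate, and the same two-term decomposition of the second difference for assertion (2). No gaps.
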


\begin{proof}
Let $L=\mathrm{Lip}_1(f^{\prime })$. The Fr\'echet differentiability of $f$
at a point $x\in U$ will follow as soon as we check that
\begin{equation*}
\|f(x+h)-f(x)-f^{\prime }_x(h)\|\le \tfrac12L\|h\|^2
\end{equation*}%
for each $h\in X$ with $[x,x+h]\subset U$. Using the Hahn-Banach Theorem,
choose a linear continuous functional $y^*\in Y^*$ such that $\|y^*\|=1$ and
$y^*\big(f(x+h)-f(x)-f^{\prime }_x(h)\big)=\|f(x+h)-f(x)-f^{\prime }_x(h)\|$%
. The weak G\^ateaux differentiability of $f$ implies that the function
\begin{equation*}
g:[0,1]\to\mathbb{C},\;\;g:t\mapsto y^*(f(x+th)-tf^{\prime }_x(h)),
\end{equation*}
is differentiable. Moreover, for each $t\in[0,1]$ we get $g^{\prime}(t)=y^*\circ
f^{\prime}_{x+th}(h)-y^*\circ f^{\prime }_x(h)$ and
\begin{equation*}
|g^{\prime}(t)|=|y^*(f^{\prime }_{x+th}(h)-f^{\prime
}_x(h))|\le\|y^*\|\cdot\|f^{\prime }_{x+th}(h)-f^{\prime }_x(h)\|\le
\|f^{\prime }_{x+th}-f^{\prime }_x\|\cdot\|h\|\le \mathrm{Lip}_1(f^{\prime})\cdot\|th\|\cdot\|h\|=tL\|h\|^2.
\end{equation*}
Then
\begin{equation*}
\|f(x+h)-f(x)-f^{\prime }_x(h)\|=|g(1)-g(0)|=\Big|\int_0^1g^{\prime }(t)dt%
\Big|\le\int_0^1|g^{\prime }(t)|dt\le \int_0^1 tL\|h\|^2dt=\frac12L\|h\|^2.
\end{equation*}

To see that $f$ is second order Lipschitz, observe that for each $h\in X$
with $[x-h,x+h]\subset U$ we get
\begin{equation*}
\begin{aligned}
\|f(x+h)-2f(x)+f(x-h)\|&=\|f(x+h)-f(x)-f'_x(h)+f(x-h)-f(x)-f'_x(-h)\|\le\\ &\le\|f(x+h)-f(x)-f'_x(h)\|+\|f(x-h)-f(x)-f'_x(-h)\|\le 2\frac12L\|h\|^2=L\|h\|^2,
\end{aligned}
\end{equation*}
which implies that $\mathrm{Lip}_2(f)\le L=\mathrm{Lip}_1(f^{\prime })$.
\end{proof}

\section{Lipschitz-open maps}

Let $X,Y$ be Banach spaces. A map $f:U\to Y$ defined on an open subset $%
U\subset X$ is called \emph{Lipschitz-open} if there is a positive constant $%
c$ such that for each $x\in X$ and $\varepsilon>0$ with $B_\varepsilon(x)%
\subset U$ we get $B_{c\varepsilon}(f(x))\subset f(B_\varepsilon(x))$.
Observe that a map $f:U\to Y$ is Lipschitz-open if and only if its \emph{%
Lipschitz-open constant}
\begin{equation*}
\mathrm{Lip}_o(f)=\sup\big\{c\in[0,\infty):\forall x\in
U\;\forall\varepsilon>0\;\; B_\varepsilon(x)\subset U\Rightarrow
B_{c\varepsilon}(f(x))\subset f(B_\varepsilon(x))\big\}
\end{equation*}
is strictly positive.

A map $f:U\to Y$ is \emph{locally Lipschitz-open} if each point $x\in U$ has
an open neighborhood $W\subset U$ such that the restriction $f|W:W\to Y$ is
Lipschitz-open.

Observe that a bijective map $f:X\to Y$ between Banach spaces is
Lipschitz-open if and only if the inverse map $f^{-1}:Y\to X$ is Lipschitz.
In this case $\mathrm{Lip}_o(f)=\mathrm{Lip}_1(f^{-1})$.

The following proposition can be derived from Theorem 15.5 of \cite{Deimling}%
.

\begin{proposition}
\label{p:lipo} Let $X,Y$ be Banach spaces. A map $f:U\to Y$ defined on an
open subspace $U$ of $X$ is locally Lipschitz-open if

\begin{enumerate}
\item $f$ is weakly G\^ateaux differentiable and the derivative $f^{\prime
}_x:X\to Y$ is surjective at each point $x\in U$;

\item the derivative $f^{\prime }:U\to L(X,Y)$ is Lipschitz.
\end{enumerate}
\end{proposition}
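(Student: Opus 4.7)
The plan is to establish the proposition by a Newton-style iteration, in the spirit of Graves' quantitative open mapping theorem, applied at an arbitrary point of $U$. Fix $x_{0}\in U$ and set $T:=f'_{x_{0}}$ and $L:=\mathrm{Lip}_{1}(f')$. Since $T$ is a bounded surjective linear operator between Banach spaces, the open mapping theorem supplies a constant $\kappa_{0}>0$ such that every $y\in Y$ admits some $v\in X$ with $T(v)=y$ and $\|v\|\le\kappa_{0}\|y\|$. The Lipschitz assumption on $f'$ gives $\|f'_{x}-T\|\le L\|x-x_{0}\|$, so a standard perturbation argument for surjective operators produces a neighborhood $W=B_{r_{0}}(x_{0})\subset U$ on which every $f'_{x}$ remains surjective with a uniform right-inverse bound $\kappa\le 2\kappa_{0}$. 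Proposition~\ref{diff} additionally yields the quadratic remainder $\|f(x+h)-f(x)-f'_{x}(h)\|\le\tfrac{L}{2}\|h\|^{2}$ on $W$.

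After shrinking $r_{0}$ so that $B_{2r_{0}}(x_{0})\subset U$ and $r_{0}$ is small enough relative to $1/(\kappa L)$, I would claim that $f|W$ is Lipschitz-open with constant $c:=1/(4\kappa)$. Fix $x\in W$ and $\varepsilon\in(0,r_{0})$ with $B_{\varepsilon}(x)\subset W$, and take any $y\in B_{c\varepsilon}(f(x))$. Starting from $\xi_{0}:=x$, recursively choose $v_{n}\in X$ with $f'_{x}(v_{n})=y-f(\xi_{n})$ and $\|v_{n}\|\le\kappa\|y-f(\xi_{n})\|$, and set $\xi_{n+1}:=\xi_{n}+v_{n}$. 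The identity
\[
y-f(\xi_{n+1})=f'_{x}(v_{n})-\bigl(f(\xi_{n}+v_{n})-f(\xi_{n})\bigr)=-\int_{0}^{1}\bigl(f'_{\xi_{n}+tv_{n}}-f'_{x}\bigr)(v_{n})\,dt,
\]
whose integral form is obtained from weak G\^ateaux differentiability by the same Hahn-Banach reduction used in Proposition~\ref{diff}, combined with the Lipschitz property of $f'$, bounds $\|y-f(\xi_{n+1})\|$ by $L\|v_{n}\|\bigl(\|\xi_{n}-x\|+\tfrac12\|v_{n}\|\bigr)$.

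An induction that simultaneously maintains $\xi_{n}$ inside $B_{\varepsilon/2}(x)\subset W$ then yields $\|v_{n+1}\|\le\tfrac12\|v_{n}\|$, provided $r_{0}$ was chosen small enough. Geometric summation gives $\|\xi_{n+1}-x\|\le\sum_{k\le n}\|v_{k}\|\le 2\|v_{0}\|\le 2\kappa\|y-f(x)\|<2\kappa c\varepsilon=\varepsilon/2$, which confirms the induction hypothesis. The Cauchy limit $\xi:=\lim\xi_{n}$ thus lies in $B_{\varepsilon}(x)$ and satisfies $f(\xi)=y$ by continuity, establishing $B_{c\varepsilon}(f(x))\subset f(B_{\varepsilon}(x))$ and hence local Lipschitz-openness at $x_{0}$.

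The principal obstacle is the bookkeeping in the induction: one must check simultaneously that the linearization error at each step is small enough to force geometric decay of $\|v_{n}\|$ and that every iterate $\xi_{n}$ remains inside $W$, so that both the uniform right-inverse bound $\kappa$ and the quadratic remainder estimate stay applicable. This is the only place where the smallness condition on $r_{0}$ is essential, and it is the reason the conclusion is only \emph{local} Lipschitz-openness rather than a global statement with universal constants.
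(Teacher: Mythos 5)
The paper does not actually prove Proposition~\ref{p:lipo}: it only remarks that the statement ``can be derived from Theorem 15.5 of \cite{Deimling}'', which is a Graves--Lyusternik type open mapping theorem. Your argument is, in effect, a self-contained reconstruction of the standard proof of that cited theorem: a successive-approximation (modified Newton) scheme with the linearization frozen at the centre $x$, a uniform right-inverse bound $\kappa$ obtained from the open mapping theorem plus a perturbation lemma, and the integral mean-value identity justified by the same Hahn--Banach reduction the paper uses in Proposition~\ref{diff}. The skeleton is sound and the constants are consistent: with $r_{0}\le 1/(2\kappa L)$ and $c=1/(4\kappa)$ one indeed gets $\|v_{n+1}\|\le\kappa L\|v_{n}\|\bigl(\|\xi_{n}-x\|+\tfrac12\|v_{n}\|\bigr)\le\tfrac12\|v_{n}\|$, the iterates stay in $B_{\varepsilon/2}(x)$, and completeness of $X$ plus continuity of $f$ (which follows from Fr\'echet differentiability, Proposition~\ref{diff}(1)) closes the argument. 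What your route buys is an explicit, quantitative and self-contained proof in place of a black-box citation; what it costs is that two ingredients you invoke as ``standard'' would need to be written out to make the proof complete: (i) the perturbation lemma asserting that if every $y$ has a $T$-preimage of norm $\le\kappa_{0}\|y\|$ and $\|S-T\|\le 1/(2\kappa_{0})$ then every $y$ has an $S$-preimage of norm $\le 2\kappa_{0}\|y\|$ (itself proved by the same iteration), and (ii) the full induction bookkeeping you candidly flag at the end. Neither is a gap in the mathematics, only in the exposition, and both are routine.
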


\section{Main Results}

\begin{theorem}
\label{main} Let $X,Y$ be Banach spaces. A map $f:U\to Y$ defined on an open
subspace $U\subset X$ is uniformly locally convex if

\begin{enumerate}
\item the Banach space $X$ has modulus of convexity of power type 2,

\item $f$ is second order Lipschitz;

\item $f$ is Lipschitz-open.
\end{enumerate}

Moreover, in this case $f$ has local convexity radius $\mathrm{lcr}(f)\ge
8\cdot {\mathrm{Lip}_o(f)\cdot\mathrm{conv}_2(X)}/{\mathrm{Lip}_2(f)}>0.$
\end{theorem}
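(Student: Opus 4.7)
The plan is to fix $x_0\in U$ and $\varepsilon>0$ with $\varepsilon<8\,\mathrm{Lip}_o(f)\,\mathrm{conv}_2(X)/\mathrm{Lip}_2(f)$ and $B_\varepsilon(x_0)\subset U$, and to prove that $V:=f(B_\varepsilon(x_0))$ is convex; by the definition of $\mathrm{lcr}(f)$ this delivers the claimed lower bound. I would establish convexity in two stages: first midpoint convexity by a direct computation combining all three hypotheses, and then an upgrade to full convexity using openness of $V$.

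For midpoint convexity, take $x_0',x_1'\in B_\varepsilon(x_0)$ with $d:=\|x_0'-x_1'\|>0$ (the case $d=0$ being trivial), and consider the natural preimage candidate $\bar x:=(x_0'+x_1')/2$ for the midpoint $\bar y:=(f(x_0')+f(x_1'))/2$. Three estimates enter. The second order Lipschitz condition applied with increment $h:=(x_1'-x_0')/2$ gives
\[
\|f(\bar x)-\bar y\|=\tfrac12\|f(\bar x+h)-2f(\bar x)+f(\bar x-h)\|\le\tfrac{\mathrm{Lip}_2(f)}{8}\,d^2.
\]
Applying the defining inequality of $\mathrm{conv}_2(X)$ to the vectors $(x_0'-x_0)/\varepsilon,(x_1'-x_0)/\varepsilon\in B_X$ yields $\|\bar x-x_0\|\le\varepsilon-\mathrm{conv}_2(X)\,d^2/\varepsilon$, which ensures that $B_r(\bar x)\subset B_\varepsilon(x_0)$ for $r:=\mathrm{conv}_2(X)\,d^2/\varepsilon$. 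Lipschitz-openness of $f$ then delivers $B_{cr}(f(\bar x))\subset f(B_r(\bar x))\subset V$ for every $c<\mathrm{Lip}_o(f)$. Our choice of $\varepsilon$ is precisely what makes $\mathrm{Lip}_2(f)\,d^2/8<\mathrm{Lip}_o(f)\,r$, so $\bar y$ sits inside $V$.

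To upgrade midpoint convexity to convexity, I would first observe that $V$ is open: for $x\in B_\varepsilon(x_0)$ and $s:=\varepsilon-\|x-x_0\|>0$, the Lipschitz-open property places a small ball around $f(x)$ inside $V$. An open midpoint-convex subset of a normed space is convex by a standard argument: all dyadic points on a segment $[y_0,y_1]\subset V$ lie in $V$ by iteration of midpoints, and an arbitrary convex combination $(1-t)y_0+ty_1$ is reached by replacing $y_1$ with a nearby point $y_1'\in V$ (available by openness) so that $(1-t')y_0+t'y_1'$ equals $(1-t)y_0+ty_1$ for a dyadic $t'$ sufficiently close to $t$.

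The main obstacle is the arithmetic balancing inside the midpoint step: the second-order Lipschitz deviation at $\bar x$ is of order $d^2$, and it must be absorbed by the room inside $B_\varepsilon(x_0)$ around $\bar x$, which is again of order $d^2$ precisely because $X$ has modulus of convexity of power type $2$. The matching of these exponents is what lets the argument close; if only $\delta_X(t)\ge L t^p$ with $p>2$ were available, the room would scale as $d^p$ and would be dominated by the $d^2$ error as $d\to 0$, breaking the whole computation.
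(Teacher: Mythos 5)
Your proof is correct and follows essentially the same route as the paper's: the midpoint step balancing the second-order Lipschitz error $\mathrm{Lip}_2(f)\,d^2/8$ against the room $\mathrm{conv}_2(X)\,d^2/\varepsilon$ created by the power-type-2 modulus, absorbed via Lipschitz-openness, is exactly the paper's Claim 4.2. The only (inessential) divergence is in upgrading midpoint convexity to convexity: you use openness of the image together with the standard dyadic/perturbation argument, whereas the paper writes $f(B_\varepsilon(x_0))$ as the increasing union of the convex sets $\mathrm{cl}_Y(f(B_\delta(x_0)))$, $\delta<\varepsilon$, each contained in $f(B_{\delta'}(x_0))$ for $\delta<\delta'<\varepsilon$ by Lipschitz-openness.
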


\begin{proof}
Given any point $x_0\in U$ and a positive $\varepsilon\le8\cdot\mathrm{Lip}%
_o(f)\cdot\mathrm{conv}_2(X) / \mathrm{Lip}_2(f)$ with $B_\varepsilon(x_0)%
\subset U$, we need to prove that the image $f(B_\varepsilon(x_0))$ is
convex. Without loss of generality, $x_0=0$.

\begin{claim}
\label{cl1} For any points $a,b\in f(B_\varepsilon(x_0))$ we get $(a+b)/2\in
f(B_\varepsilon(x_0))$.
\end{claim}

\begin{proof}
Find two points $x,y\in B_\varepsilon(x_0)=B_\varepsilon(0)$ with $a=f(x)$
and $b=f(y)$, and consider the midpoint $z=(x+y)/2$. Observe that the points
$x_\varepsilon=x/\varepsilon$, $y_\varepsilon=y/\varepsilon$, and $%
z_\varepsilon=z/\varepsilon$ have norms $\le 1$.

The definition of the 2-convexity number $\mathrm{conv}_2(X)$ guarantees
that
\begin{equation*}
1-\frac1\varepsilon\|z\|=1-\|z_\varepsilon\|\ge\mathrm{conv}_2(X)\cdot
\|x_\varepsilon-y_\varepsilon\|^2=\frac1{\varepsilon^2}\mathrm{conv}%
_2(X)\|x-y\|^2
\end{equation*}%
and thus
\begin{equation*}
\varepsilon-\|z\|\ge \frac1{\varepsilon}\mathrm{conv}_2(X)\|x-y\|^2.
\end{equation*}
Then $B_\delta(z)\subset B_\varepsilon(x_0)$, where
\begin{equation*}
\delta=\frac1{\varepsilon}\mathrm{conv}_2(X)\|x-y\|^2\ge \frac{\mathrm{Lip}%
_2(f)}{8\mathrm{Lip}_o(f)\cdot\mathrm{conv}_2(X)}\mathrm{conv}_2(X)\|x-y\|^2=%
\frac{\mathrm{Lip}_2(f)}{8\mathrm{Lip}_o(f)}\|x-y\|^2
\end{equation*}
and hence
\begin{equation*}
f(B_\varepsilon(x_0))\supset f(B_\delta(z))\supset B_{\mathrm{Lip}%
_o(f)\delta}(f(z))=B_{\eta}(f(z))
\end{equation*}%
where $\eta=\mathrm{Lip}_o(f)\,\delta=\frac18\mathrm{Lip}_2(f)\|x-y\|^2.$

The definition of the constant $\mathrm{Lip}_2(f)$ implies that for $h=z-x$,
we get
\begin{equation*}
\begin{aligned}
\|(a+b)/2-f(z)\|&=\|(f(x)+f(y))/2-f(z)\|=\frac12\|f(z-h)-2f(z)+f(z+h)\|\le\\
&\le \frac12\mathrm{Lip}_2(f)\|h\|^2=\frac18\mathrm{Lip}_2(f)\|x-y\|^2=\eta
\end{aligned}
\end{equation*}%
and hence $(a+b)/2\in B_\eta(f(z))\subset f(B_\varepsilon(x_0))$.
\end{proof}

Claim~\ref{cl1} implies that the closure $\mathrm{cl}_Y(f(B_%
\varepsilon(x_0)) $ is convex. The Lipschitz-openness of the map $f$ implies
that for any numbers $\delta<\eta<\varepsilon$ we get $\mathrm{cl}%
_Y(f(B_\delta(x_0))\subset f(B_\eta(x_0))$. Then the open set $%
f(B_\varepsilon(x_0))$ is convex, being the union
\begin{equation*}
f\big(B_\varepsilon(x_0)\big)=f\Big(\bigcup_{0<\delta<\varepsilon}B_%
\delta(x_0)\Big)=\bigcup_{0<\delta<\varepsilon}\mathrm{cl}_Y\big(%
f(B_\delta(x_0)\big)
\end{equation*}%
of a linearly ordered chain of convex sets.
\end{proof}

Taking into account that each Hilbert space $X$ has 2-convexity number $%
\mathrm{conv}_2(E)\ge\frac18$, and applying Theorem~\ref{main}, we get:

\begin{corollary}
\label{main-1} Let $Y$ be a Banach space and $U$ be an open subspace of a
Hilbert space $X$. Each Lipschitz-open second order Lipschitz map $%
f:U\rightarrow Y$ is uniformly locally convex and has local convexity radius
$\mathrm{lcr}(f)\geq {\mathrm{Lip}_{o}(f)}/{\mathrm{Lip}_{2}(f)}>0.$
\end{corollary}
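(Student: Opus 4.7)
The plan is to apply Theorem~\ref{main} directly, so the entire task reduces to evaluating (or at least sharply lower-bounding) the 2-convexity number $\mathrm{conv}_2(X)$ of a Hilbert space. Hypotheses (2) and (3) of Theorem~\ref{main}, namely that $f$ is second order Lipschitz and Lipschitz-open, are part of the data of the corollary and need no verification. So only hypothesis (1), that $X$ has modulus of convexity of power type 2, must be checked, together with a quantitative bound on $\mathrm{conv}_2(X)$.

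For that I would combine two facts already recorded in Section~1. First, any Hilbert space $E$ with $\dim E>1$ satisfies
\begin{equation*}
\delta_E(t)=1-\sqrt{1-(t/2)^2}\ge \tfrac{1}{8}t^2\qquad\text{for all }t\in[0,2].
\end{equation*}
Second, the inequality $\delta_X(t)\ge L t^2$ on $[0,2]$ is equivalent to $\mathrm{conv}_2(X)\ge L$: indeed, for any $x,y\in B_X$ with $x\ne y$, setting $t=\|x-y\|$ in the ball version of the modulus gives $1-\|(x+y)/2\|\ge\delta_X(\|x-y\|)\ge L\|x-y\|^2$, and taking infimum over such pairs yields $\mathrm{conv}_2(X)\ge L$; the converse is immediate. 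Applied with $L=1/8$ this shows $\mathrm{conv}_2(X)\ge 1/8$, and combined with the universal upper bound $\mathrm{conv}_2(X)\le\mathrm{conv}_2(\ell_2)=1/8$ quoted at the end of Section~1, one actually has equality $\mathrm{conv}_2(X)=\tfrac18$.

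Plugging this into the bound supplied by Theorem~\ref{main} gives
\begin{equation*}
\mathrm{lcr}(f)\ge \frac{8\,\mathrm{Lip}_o(f)\cdot\mathrm{conv}_2(X)}{\mathrm{Lip}_2(f)}=\frac{\mathrm{Lip}_o(f)}{\mathrm{Lip}_2(f)}>0,
\end{equation*}
which is exactly the stated conclusion, and simultaneously witnesses uniform local convexity of $f$. Since the corollary is a pure specialization of the main theorem, I expect no genuine obstacle; the only mildly delicate point is matching the $\mathrm{conv}_2$ quantity with the power-type-2 modulus-of-convexity estimate, which is handled by the equivalence above. The degenerate one-dimensional case, in which the explicit Hilbert-space formula for $\delta_E$ is not stated, is trivial (Lipschitz-open maps on an interval of $\mathbb{R}$ are monotone homeomorphisms onto open subsets of $Y$ in the relevant direction) and can be disposed of separately if one wishes to be pedantic.
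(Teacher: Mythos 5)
Your proposal is correct and follows exactly the paper's route: the paper likewise derives the corollary by noting that every Hilbert space has $\mathrm{conv}_2(X)\ge\tfrac18$ and substituting this into the bound of Theorem~\ref{main}. Your extra verification that $\delta_E(t)\ge\tfrac18 t^2$ translates (via the ball form of the modulus) into $\mathrm{conv}_2(X)\ge\tfrac18$ just makes explicit what the paper leaves implicit.
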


Theorem~\ref{main} combined with Propositions~\ref{diff} and \ref{p:lipo}
implies the following two corollaries.

\begin{corollary}
\label{C1} Let $X,Y$ be Banach spaces. A map $f:U\to Y$ defined on an open
subspace $U\subset X$ is uniformly locally convex if

\begin{enumerate}
\item the Banach space $X$ has modulus of convexity of power type 2,

\item $f$ is weakly G\^ateaux differentiable and the derivative $f^{\prime
}:U\to L(X,Y)$ is Lipschitz;

\item $f$ is Lipschitz-open.
\end{enumerate}
\end{corollary}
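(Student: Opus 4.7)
The plan is to reduce Corollary~\ref{C1} directly to Theorem~\ref{main} by invoking Proposition~\ref{diff} to convert the smoothness hypothesis on $f'$ into a second order Lipschitz estimate on $f$ itself. Hypotheses (1) and (3) of Corollary~\ref{C1} coincide verbatim with hypotheses (1) and (3) of Theorem~\ref{main}, so the only piece of the argument that is missing is the verification of hypothesis (2) of Theorem~\ref{main}, namely that $f$ is second order Lipschitz.

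First I would apply part (2) of Proposition~\ref{diff}: the weak G\^ateaux differentiability of $f$ on $U$ together with the Lipschitz continuity of the derivative map $f':U\to L(X,Y)$ yields that $f$ is second order Lipschitz with $\Lip_2(f)\le\Lip_1(f')$. With this estimate established, all three hypotheses of Theorem~\ref{main} are in force, and applying that theorem yields the uniform local convexity of $f$ together with the quantitative bound
$$\mathrm{lcr}(f)\ge \frac{8\,\Lip_o(f)\cdot\conv_2(X)}{\Lip_2(f)}\ge \frac{8\,\Lip_o(f)\cdot\conv_2(X)}{\Lip_1(f')}>0,$$
where positivity of the right-hand side uses $\conv_2(X)>0$ from hypothesis (1) and $\Lip_o(f)>0$ from hypothesis (3); the degenerate case $\Lip_1(f')=0$ corresponds to $f$ being affine, in which case the image of every ball is trivially convex.

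There is really no obstacle to overcome: Proposition~\ref{diff} has already done the analytic work of transferring regularity from $f'$ to a second order difference estimate on $f$ via the Hahn-Banach theorem and integration along segments, while Theorem~\ref{main} has already done the geometric work of combining the modulus of convexity of $X$, the second order Lipschitz estimate on $f$, and Lipschitz-openness to guarantee convex images of balls. The remainder of the proof is only the bookkeeping needed to line up the two sets of hypotheses.
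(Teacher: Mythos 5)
Your proposal is correct and follows exactly the route the paper intends: the paper derives Corollary~\ref{C1} by combining Theorem~\ref{main} with Proposition~\ref{diff}(2), which supplies the missing second order Lipschitz hypothesis via $\mathrm{Lip}_2(f)\le\mathrm{Lip}_1(f')$. Your additional remarks on the quantitative bound and the degenerate case $\mathrm{Lip}_1(f')=0$ are harmless elaborations of the same argument.
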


\begin{corollary}
\label{C2} Let $X,Y$ be Banach spaces. A map $f:U\to Y$ defined on an open
subspace $U\subset X$ is locally convex if

\begin{enumerate}
\item the Banach space $X$ has modulus of convexity of power type 2,

\item $f$ is weakly G\^ateaux differentiable and the derivative $f^{\prime
}:U\to L(X,Y)$ is Lipschitz;

\item for each $x\in U$ the derivative $f^{\prime }_x:X\to Y$ is surjective.
\end{enumerate}
\end{corollary}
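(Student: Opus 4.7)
The plan is to reduce Corollary~\ref{C2} to Theorem~\ref{main} via a localization argument, since the hypotheses of C2 give only \emph{local} Lipschitz-openness (through Proposition~\ref{p:lipo}), not the global Lipschitz-openness required by Theorem~\ref{main}. Because local convexity is a pointwise condition, this weaker conclusion is nevertheless sufficient.

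Fix an arbitrary $x_0\in U$; the goal is to show $\mathrm{lcr}_{x_0}(f)>0$. Conditions (2) and (3) of C2 are precisely the hypotheses of Proposition~\ref{p:lipo}, so $f$ is locally Lipschitz-open. Hence there exists an open neighborhood $W\subset U$ of $x_0$ on which the restriction $g:=f|_W$ is Lipschitz-open, i.e.\ $\mathrm{Lip}_o(g)>0$.

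Next I would verify that $g$ satisfies all three hypotheses of Theorem~\ref{main}. Hypothesis (1) is inherited from $X$ globally. For hypothesis (2): since $f'$ is Lipschitz on $U$, its restriction $g'=f'|_W$ is Lipschitz on $W$ with $\mathrm{Lip}_1(g')\le \mathrm{Lip}_1(f')<\infty$, so by Proposition~\ref{diff} the map $g$ is second order Lipschitz with $\mathrm{Lip}_2(g)\le \mathrm{Lip}_1(f')$. Hypothesis (3) is the Lipschitz-openness of $g$ just secured. Theorem~\ref{main} then yields
\begin{equation*}
\mathrm{lcr}(g)\ge 8\cdot \mathrm{Lip}_o(g)\cdot \mathrm{conv}_2(X)/\mathrm{Lip}_2(g)>0.
\end{equation*}

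To transfer this back to $f$, pick $r>0$ with $B_r(x_0)\subset W$ and set $c=\min\{r,\mathrm{lcr}(g)\}>0$. For each $\varepsilon\le c$ the ball $B_\varepsilon(x_0)$ lies in $W$, so $f(B_\varepsilon(x_0))=g(B_\varepsilon(x_0))$, and the latter is convex by uniform local convexity of $g$. Thus $\mathrm{lcr}_{x_0}(f)\ge c>0$, which proves local convexity of $f$ at $x_0$. The only delicate point, and where care is required, is the bookkeeping in shrinking from $U$ to the neighborhood $W$ — one must note that the second order Lipschitz constant and modulus of convexity behave well under restriction so that Theorem~\ref{main} genuinely applies to $g$, and that $\varepsilon$-balls centered at $x_0$ ultimately stay inside $W$ so that the values of $f$ and $g$ on them coincide.
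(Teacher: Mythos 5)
Your proposal is correct and takes essentially the same route as the paper, which derives Corollary~\ref{C2} precisely by combining Proposition~\ref{p:lipo} (local Lipschitz-openness), Proposition~\ref{diff} (the second order Lipschitz property), and Theorem~\ref{main} applied on a suitable neighborhood. The localization bookkeeping you spell out is exactly the content the paper leaves implicit, and you handle it correctly.
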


\section{An Open Problem}

We do not know if the requirement on the convexity modulus of the Banach
space $X$ is essential in Theorem~\ref{main} and Corollaries~\ref{C1}, \ref%
{C2}.

\begin{problem}
\label{prob1} Assume that $X$ is a Banach space such that any Lipschitz-open
second order Lipschitz map $f:U\to X$ defined on an open subset $U\subset X$
is locally convex. Has $X$ the modulus of convexity of power type 2? Is $X$
(super)reflexive?
\end{problem}

\section{Acknowledgements}

The fourth author (A.P.) is grateful to Prof. L. G\'{o}rniewicz for
invitation to take part in the VI Symposium of Nonlinear Analysis held 7 --
9 September 2011 in Toru\'{n} and for very fruitful discussion and remarks.
He also sincerely thanks Prof. A. Augustynowicz for valuable comments on the
first draft of the papers and mentioning the references related with the
topic studied in the article. Special acknowledgment belongs to the
Scientific and Technological Research Council of Turkey (TUBITAK/NASU-111T558) for a partial support
of   A.K. Prykarpatsky's research. \newpage

\end{document}